\documentclass[12pt]{amsart}

\usepackage[ngerman,english]{babel}

\usepackage[utf8]{inputenc}

\usepackage[T1]{fontenc}
\usepackage{ae,aecompl} 

\usepackage{hyperref}

\usepackage{a4wide}
\textheight8.35in \textwidth6.35in

\usepackage{amsmath}
\usepackage{amssymb}
\usepackage{amsthm}
\usepackage{latexsym}

\usepackage{enumerate} 

\usepackage{mathrsfs}

\pagestyle{headings}

\newcommand{\setEnglish}{
\selectlanguage{english}
\def\Chapter{Chapter}
\def\Section{Section}
\def\Theorem{Theorem}
\def\Proposition{Proposition}
\def\Lemma{Lemma}
\def\Corollary{Corollary}
\def\Definition{Definition}
\def\Remark{Remark}
\def\Example{Example}
\def\Examples{Examples}
\def\Consequence{Consequence}
\def\Fact{Fact}
\def\Notation{Notation}
\def\Conjecture{Conjecture}
\def\Exercise{Exercise}
\def\AExercise{Additionalexercise}
\def\Project{Project}
\def\Problem{Problem}
\def\Observation{Observation}
}

\newtheorem{theorem}{\Theorem}
\newtheorem{proposition}[theorem]{\Proposition}
\newtheorem{lemma}[theorem]{\Lemma}
\newtheorem{corollary}[theorem]{\Corollary}

\theoremstyle{definition}

\newtheorem{definition}[theorem]{\Definition}
\newtheorem*{remark}{\Remark}


\numberwithin{theorem}{section}

\setlength{\unitlength}{1cm}


\newcommand{\N}{\ensuremath{\mathbb{N}}}    
\newcommand{\Z}{\ensuremath{\mathbb{Z}}}    
\newcommand{\Q}{\ensuremath{\mathbb{Q}}}    
\newcommand{\C}{\ensuremath{\mathbb{C}}}    

\renewcommand{\H}{\mathbb{H}}

\newcommand{\SLZ}{\SL_2(\Z)}



\renewcommand{\phi}{\varphi}
\renewcommand{\epsilon}{\varepsilon}

\DeclareMathOperator{\SL}{SL}

\DeclareMathOperator{\Gal}{Gal}

\newcommand{\set}[1]{\left\{ #1 \right\}}
\newcommand{\br}[1]{\left( #1 \right)}
\newcommand{\abs}[1]{\left|#1\right|}    

\renewcommand{\b}{b_{m,k,N}(l,v)}
\newcommand{\GN}{\Gamma_0(N)}

\title{Class invariants for certain non-holomorphic modular functions}

\author{Joschka J. Braun}
\address{Technische Universität Darmstadt, D-64289 Darmstadt, Germany}
\email{joschka.braun@gmail.com}
\author{Johannes J. Buck}
\address{Technische Universität Darmstadt, D-64289 Darmstadt, Germany}
\email{buck@stud.tu-darmstadt.de}
\author{Johannes Girsch}
\address{Fakultät für Mathematik, Universität Wien, A-1090 Vienna, Austria}
\email{johannes.girsch@live.de}
\date{\today}

\begin{document}
\setEnglish

\begin{abstract}

Inspired by previous work of Bruinier-Ono and Mertens-Rolen, we study class polynomials for non-holomorphic modular functions arising from modular forms of negative weight.
In particular, we give general conditions for the irreducibility of class polynomials and obtain a general theorem to check when functions constructed in a special way are class invariants.
\end{abstract}

\maketitle

\section{Introduction and Statement of the Main Results}

Let us consider Klein's $j$-invariant, $j(\tau)$, which is defined by

\[
j(\tau):=\frac{\left(1+240 \sum_{n=1}^{\infty} \sum_{d|n} d^3 q^n\right)^3}{q\prod_{n=1}^{\infty}(1-q^n)^{24}}=q^{-1}+744+196884q+\cdots,
\]
where $q:=e^{2\pi i\tau}$ and $\tau:=u+iv\in\mathbb{H}:=\{x+iy\in\mathbb{C} : y>0\}$ with $u,v\in\mathbb{R}$. The function $j(\tau)$ is a modular function and its values at CM points (quadratic imaginary points in $\mathbb{H}$) are called \emph{singular moduli}. These distinguished numbers play a central role in explicit class field theory and the classical theory of complex multiplication. In particular, they are algebraic, and they generate \emph{Hilbert class fields} of imaginary quadratic fields. For a general survey of this theory, see e.g. \cite{borel1966seminar}.

Throughout, let $D\equiv 0,1 \pmod 4$ be a negative integer and let $\mathcal{Q}_D$ be the set of $\SLZ$-reduced positive definite integral binary quadratic forms, i.e., those forms

$$
Q(x,y)=ax^2+bxy+cy^2,
$$
with $-a<b\leq a<c\text{ or } 0\leq b\leq a=c$ and discriminant $D:=b^2-4ac$. Each positive definite quadratic form of discriminant $D$ is uniquely $\SLZ$-equivalent to exactly one form in $\mathcal{Q}_D$. The associated complex point $\tau_Q$ of a given quadratic form $Q$ is the unique point in the upper half plane which satisfies the equation $Q(\tau_Q, 1)=0$. We call the associated value $j(\tau_Q)$ the singular modulus corresponding to $\tau_Q$.

Now we introduce the following polynomial:

\[
H_{D, P}(x):=\prod_{Q\in \mathcal{Q}_D} (x-P(\tau)),
\]

for any (possibly non-holomorphic) modular function $P(\tau)$.

For $P(\tau)=j(\tau)$, $H_{D, P}(x)$ is called the \emph{Hilbert class polynomial}. Celebrated results give that it is irreducible and generates the Hilbert class field of $\mathbb{Q}(\sqrt{D})$ for $D$ fundamental. For more general discriminants, it generates ring class fields (see \cite{borel1966seminar}).

Analogously, we set
\[
\widehat{H}_{D, P}(x):= \prod_{Q\in\mathcal P_D}(x-P(\tau_Q)),
\]
where $\mathcal P_D$ denotes the set of quadratic forms in $\mathcal{Q}_D$ with $\gcd(a,b,c)=1$. The quadratic forms in $\mathcal P_D$ are called \emph{primitive}.

One can easily see as in Lemma 3.7 of \cite{bruinier2013class} the following relation between $H_{D, P}$ and $\widehat{H}_{D, P}$:

\[
H_{D, P}(x)=\prod_{\substack{a>0\\a^2|D}}\epsilon(a)^{h\left(\frac{D}{a^2}\right)}\widehat{H}_{\frac{D}{a^2}, P}(\epsilon(a)x)
\]
where $h(d)$ is the class number of discriminant $d$, $\epsilon(a)=1$ if $f\equiv \pm 1 \pmod{12}$ and $\epsilon(a)=-1$ otherwise.

In pathbreaking work, Bruinier and Ono connected these polynomials to partitions, when $P$ was an explicit non-holomorphic form of level $6$, yielding a formula for the number of partitions as a finite sum of algebraic numbers. They, together with Sutherland, conjectured that the associated polynomials always generate ring class fields and are irreducible (\cite{bruinier2013algebraic}, p. 20), which was shown by Mertens and Rolen in \cite{mertens2015class}. Many others have studied properties of these non-holomorphic singular moduli (see \cite{alfes2014formulas,bruinier2013class,griffin2013ramanujan,griffinproperties,larson2011integrality}). Here, we consider the general problem of constructing class invariants (i.e., modular forms whose CM-values generate Hilbert class fields) from non-holomorphic modular forms of a special type.

In particular, we show the following, where $B$ is defined in \eqref{eq:b} and,
in the setting of Theorem \ref{main-theorem}, is a sum which depends on $m$ and $k$, where $P_F$ is an iterated non-holomorphic derivative of $F$ of weight $0$, defined in \ref{pf}, and where the polynomial $\widehat{H}_{D,F}(x)$ is defined in \ref{hdf}. To prove Theorem \ref{main-theorem}, we proceed as in \cite{mertens2015class}.

\begin{theorem} \label{main-theorem}
Suppose $k \in \mathbb{N} \setminus \{0\}$ and $F$ is a weakly holomorphic modular form of weight $-2k$ on $\SLZ$ with principal part 
$$\sum_{n=1}^{m}a_n q^{-n}$$
and rational Fourier coefficients. If $D$ is a fundamental discriminant satisfying
\[
\sqrt{-D}>\frac{2}{\pi}\log{\left(\frac{B\sum_{n=1}^{m}|a_n|}{|a_m||\sum_{r=0}^k\binom{k}{r}\frac{(-2k+r)_{k-r}m^r}{(2\sqrt{-D}\pi )^{k-r}}|}e^{-\sqrt{-D}\pi(m-1/2)}\right)},
\]
then the polynomial $\widehat{H}_{D,F}(x)$ is irreducible over $\mathbb{Q}$.
\end{theorem}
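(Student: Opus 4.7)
The plan is to follow the strategy of Mertens and Rolen~\cite{mertens2015class}, which splits the argument into a class-field-theoretic reduction and an analytic estimate. Because $D$ is a fundamental discriminant and $P_F$ is a (non-holomorphic) modular function of weight zero on $\SLZ$ built from a weakly holomorphic modular form with rational Fourier coefficients, I take for granted from the preceding parts of the paper (following the Bruinier-Ono-type theory cited in the introduction) two inputs: every CM value $P_F(\tau_Q)$ lies in the Hilbert class field $H$ of $K=\Q(\sqrt{D})$, and the class group $\operatorname{Cl}(D)\cong\Gal(H/K)$ permutes the multiset $\{P_F(\tau_Q):Q\in\mathcal P_D\}$ simply transitively, whence $\widehat{H}_{D,F}(x)\in\Q[x]$. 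Under these inputs, irreducibility of $\widehat{H}_{D,F}(x)$ over $\Q$ amounts to showing that the stabilizer of $P_F(\tau_{Q_0})$ in $\operatorname{Cl}(D)$ is trivial, where $Q_0=[1,b_0,c_0]$ is the unique primitive reduced form of discriminant $D$ with $a=1$ (the principal form). Equivalently, it is enough to prove $P_F(\tau_{Q_0})\neq P_F(\tau_Q)$ for every $Q\in\mathcal P_D\setminus\{Q_0\}$, since the (free) transitive class-group action then lifts this to pairwise distinctness of all $h(D)$ CM values, making $\widehat{H}_{D,F}$ the minimal polynomial of any of them.

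I would then replace this by the stronger analytic inequality $|P_F(\tau_{Q_0})|>|P_F(\tau_Q)|$ for every $Q\neq Q_0$. The principal form is distinguished by the fact that $\tau_{Q_0}=(-b_0+i\sqrt{-D})/2$ has imaginary part $\sqrt{-D}/2$, whereas every other primitive reduced form has $a\geq 2$ and hence $\operatorname{Im}(\tau_Q)=\sqrt{-D}/(2a)\leq \sqrt{-D}/4$. Since the singular terms $a_n q^{-n}$ in the Fourier expansion of $F$ grow like $e^{2\pi n\operatorname{Im}(\tau)}$, this gap of a factor of two in imaginary part produces a drastic gap between $|P_F(\tau_{Q_0})|$ and $|P_F(\tau_Q)|$.

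The analytic core is the computation of two bounds using the explicit formula for $P_F$ from~\ref{pf}. First, evaluating $P_F$ at $\tau_{Q_0}$ and isolating the most singular term $a_m q^{-m}$ of $F$ yields, after carrying out the iterated Maass-type derivative in closed form, a contribution of precisely $|a_m|\,\bigl|\sum_{r=0}^{k}\binom{k}{r}(-2k+r)_{k-r}m^r/(2\sqrt{-D}\,\pi)^{k-r}\bigr|\,e^{\pi m\sqrt{-D}}$ to $|P_F(\tau_{Q_0})|$; the remaining principal-part terms $a_n q^{-n}$ ($n<m$) together with the regular part of $F$ are absorbed into an error controlled by $B\sum_{n=1}^{m}|a_n|$, with $B$ as in~\eqref{eq:b}. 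Second, weight-zero modularity of $P_F$ combined with $\operatorname{Im}(\tau_Q)\leq\sqrt{-D}/4$ for $Q\neq Q_0$ yields an upper bound on $|P_F(\tau_Q)|$ of the same shape $B\sum_{n=1}^{m}|a_n|$. Combining the two estimates, the required inequality $|P_F(\tau_{Q_0})|>|P_F(\tau_Q)|$ becomes
\[
|a_m|\,\Bigl|\sum_{r=0}^{k}\binom{k}{r}\frac{(-2k+r)_{k-r}\,m^r}{(2\sqrt{-D}\,\pi)^{k-r}}\Bigr|\,e^{\pi m\sqrt{-D}}\ >\ B\sum_{n=1}^{m}|a_n|,
\]
which, after taking logarithms and rearranging, is exactly the hypothesis on $\sqrt{-D}$ in the theorem.

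The main obstacle is the sharp lower bound for $|P_F(\tau_{Q_0})|$: one has to carry out the $k$-fold non-holomorphic differentiation of $F$ term by term at $\tau_{Q_0}$, identify the closed-form polynomial $\sum_{r=0}^{k}\binom{k}{r}(-2k+r)_{k-r}m^r/(2\sqrt{-D}\pi)^{k-r}$ that arises from differentiating $a_m q^{-m}$, and show that every other contribution (the lower principal terms, the regular part of $F$, and the non-holomorphic corrections coming from the Maass raising/lowering operators) is uniformly controlled by $B\sum_{n}|a_n|$. The upper bound at non-principal CM points is, by contrast, a routine consequence of modularity and of the fact that the imaginary parts there are at most $\sqrt{-D}/4$, and it too is packaged into the definition of $B$.
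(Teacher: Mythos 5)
Your proposal is correct and follows essentially the same route as the paper: transitivity of the Galois action (via Schertz's theorem together with Masser's formula) reduces irreducibility to showing the CM value at the unique reduced form with $a=1$ differs from all others, and the gap $\operatorname{Im}(\tau_{Q_0})=\sqrt{-D}/2$ versus $\operatorname{Im}(\tau_Q)\leq\sqrt{-D}/4$ combined with the explicit dominant term $|a_m|\bigl|\sum_{r=0}^k\binom{k}{r}(-2k+r)_{k-r}m^r/(2\sqrt{-D}\pi)^{k-r}\bigr|e^{\pi m\sqrt{-D}}$ against the error bound $B\sum_n|a_n|$ yields exactly the stated inequality. The analytic estimates you defer are precisely the content of the paper's Lemma \ref{estilem} (bounding the contribution of the regular part via a Poincar\'e-series decomposition) plus the elementary bounds giving $B_3$ and $B_4$.
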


\begin{remark}
The bounds used in this paper can be easily adapted for higher levels. Therefore, once one explicitly knows the higher level Heegner points, a similar analysis could be done to generate class invariants. For example, in level $6$ for quadratic forms of discriminant $-24n+1$ the Heegner points have been determined by Dewar and Murty, see \cite{dewar2013derivation}. In particular, Dewar and Murty's results were used in the paper of Mertens and Rolen (\cite{mertens2015class}).
\end{remark}

Note that it is easy to show that the singular moduli in Theorem \ref{main-theorem} lie in the appropriate Hilbert class field (see Lemma 4.4 in \cite{bruinier2013algebraic}). In particular, we have the following.

\begin{corollary}
If $D\ll 0$ is fundamental, then $\widehat{H}_{D,F}(x)$ is irreducible and the splitting field of this polynomial is isomorphic to the Hilbert class field of $\Q (\sqrt{D})$.
\end{corollary}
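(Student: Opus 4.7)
The strategy is to combine Theorem \ref{main-theorem} with the fact that CM--values of weight-zero modular functions on $\SLZ$ lie in the appropriate Hilbert class field, and then to identify the splitting field via the class-group action, as in \cite{mertens2015class}.

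For the irreducibility claim: once $F$ is fixed, the quantities $m$, $k$, $B$, and the Fourier coefficients $a_1,\dots,a_m$ are constants. The expression inside the logarithm on the right-hand side of the bound in Theorem \ref{main-theorem} is dominated, as $D\to-\infty$, by the decaying factor $e^{-\sqrt{-D}\pi(m-1/2)}$, so the logarithm tends to $-\infty$ while $\sqrt{-D}\to+\infty$. Hence the hypothesis of Theorem \ref{main-theorem} is eventually satisfied, giving irreducibility of $\widehat{H}_{D,F}(x)$ over $\Q$ for all sufficiently negative fundamental discriminants $D$.

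For the identification of the splitting field, I would first invoke Lemma~4.4 of \cite{bruinier2013algebraic} (cited in the paragraph immediately preceding the corollary) to conclude that $P_F(\tau_Q)\in H_D$ for every $Q\in\mathcal{P}_D$, where $H_D$ denotes the Hilbert class field of $\Q(\sqrt{D})$. Hence the splitting field $L$ of $\widehat{H}_{D,F}(x)$ over $\Q$ is contained in $H_D$. The classical theory of complex multiplication, together with the modular invariance of $P_F$, provides a simply transitive action of $\Gal(H_D/\Q(\sqrt{D}))$ on the set $\{P_F(\tau_Q)\}_{Q\in\mathcal{P}_D}$; combined with irreducibility this yields $\Q(\sqrt{D})(\alpha)=H_D$ for any fixed root $\alpha$, and hence $L\cdot\Q(\sqrt{D})=H_D$.

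The main obstacle is verifying that $\sqrt{D}\in L$, so that $L=H_D$ rather than a subfield of index two. Since $\widehat{H}_{D,F}(x)\in\Q[x]$, the field $L$ is Galois over $\Q$, so $\Gal(H_D/L)$ is a normal subgroup of $\Gal(H_D/\Q)\cong\mathrm{Cl}(D)\rtimes\Z/2\Z$ of order at most two. I would rule out the nontrivial possibility by examining the action of complex conjugation on a single CM--value $P_F(\tau_Q)$: because $P_F$ carries a genuine non-holomorphic contribution from iterated raising of the weight $-2k$ form $F$, the conjugate $\overline{P_F(\tau_Q)}$ equals $P_F(\tau_{Q'})$ for a distinct primitive form $Q'$ (for $|D|$ sufficiently large that $h(D)>1$ and the class group is not of exponent two, which is automatic for $D\ll 0$ by standard lower bounds on the class number combined with genus theory). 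It follows that no order-two normal subgroup of $\Gal(H_D/\Q)$ can fix the root set, forcing $L=H_D$, which completes the proof.
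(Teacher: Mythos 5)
Your proposal is correct, and it follows the route the paper intends --- but note that the paper offers no written proof of this corollary at all: it is presented as an immediate consequence of Theorem \ref{main-theorem} together with the preceding one-line remark that the singular moduli lie in the Hilbert class field (Lemma 4.4 of \cite{bruinier2013algebraic}), the transitivity of the Galois action being supplied by Theorem \ref{schertz} via Lemma \ref{prop-masser} exactly as in the proof of Theorem \ref{main-theorem}. So your write-up is strictly more detailed than the source. Two remarks on that extra detail. First, in the irreducibility step the quantity $B$ of \eqref{eq:b} is not a constant in $D$: the terms $B_3$ and $B_4$ grow with $\sqrt{-D}$, so the argument of the logarithm need not tend to $0$ (for $m=1$ the $B_4$-contribution is a nonzero constant). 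What the paper uses, and what suffices, is that the right-hand side of the inequality stays bounded while $\sqrt{-D}\to\infty$. Second, your concern about whether $\sqrt{D}$ lies in the splitting field $L$ over $\Q$ is a genuine subtlety that the paper (following \cite{mertens2015class}) passes over in silence, and your resolution is essentially the right one: simple transitivity of $\Gal(H_D/\Q(\sqrt{D}))$ on the $h(D)$ distinct roots forces any nontrivial element of $\Gal(H_D/L)$ to lie outside $\Gal(H_D/\Q(\sqrt{D}))$, normality of $\Gal(H_D/L)$ in $\Gal(H_D/\Q)$ then forces the class group to have exponent two, and this is excluded for $D\ll 0$ by the (ineffective) lower bound $h(D)\gg_\epsilon |D|^{1/2-\epsilon}$ against the genus-theoretic count of two-torsion classes. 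However, your stated justification --- that the ``genuine non-holomorphic contribution'' of $P_F$ makes $\overline{P_F(\tau_Q)}$ a value at a \emph{distinct} form --- is a red herring and is false for the principal form: the Fourier coefficients of $P_F$ are real and $-\overline{\tau_Q}$ is $\SLZ$-equivalent to $\tau_Q$ there, so that root is real and fixed by conjugation. What moves a root is the existence of an ideal class of order greater than two, which is exactly the exponent-two condition you already invoke; with that substitution your argument closes.
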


These bounds can also be simplified to give the following.

\begin{corollary} \label{main-coro}
Assume the hypothesis in Theorem \ref{main-theorem}. If there is a constant $c>1$ such that
\[
\sqrt{-D}> \max \left(\frac{k}{m\pi\left(\sqrt[k]{\frac{2c-1}{c}}-1\right)}, \frac{2}{\pi}\log\left(615cm^{1+k}\left(m+\frac{k}{\sqrt{3}\pi}\right)^k\frac{\sum_{n=1}^{m}|a_n|}{|a_m|}\right) \right),
\]
then $\widehat{H}_{D,F}(x)$ is irreducible. In particular, for all fundamental discriminants $D'$ with $|D'|\geq|D|$  the polynomials $\widehat{H}_{D',F}(x)$ are irreducible.
\end{corollary}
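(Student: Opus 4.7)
The strategy is to decouple the appearance of $\sqrt{-D}$ inside the logarithm on the right-hand side of Theorem \ref{main-theorem}, and to replace the complicated expressions by explicit functions of $m$, $k$, and the coefficients $a_n$. This naturally splits into two independent tasks: bounding the denominator sum from below, and bounding $B$ from above.

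First I would estimate, from below, the denominator sum
\[
S(\sqrt{-D}) \;:=\; \sum_{r=0}^{k}\binom{k}{r}\frac{(-2k+r)_{k-r}\,m^{r}}{(2\sqrt{-D}\pi)^{k-r}}.
\]
The $r=k$ term equals $m^{k}$, while for $r<k$ the Pochhammer factor satisfies $|(-2k+r)_{k-r}| = (k+1)(k+2)\cdots(2k-r) \leq (2k)^{k-r}$. Applying the triangle inequality to isolate the leading term and then summing the remainder using the binomial theorem yields
\[
|S(\sqrt{-D})| \;\geq\; 2m^{k} - \left(m + \tfrac{k}{\sqrt{-D}\,\pi}\right)^{k}.
\]
This quantity is $\geq m^{k}/c$ precisely when $(m+k/(\sqrt{-D}\pi))^{k} \leq (2c-1)m^{k}/c$, which is equivalent to $\sqrt{-D} \geq k/\bigl(m\pi(\sqrt[k]{(2c-1)/c}-1)\bigr)$, the first entry of the maximum in the corollary.

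Next I would produce an explicit majorant $B \leq C_{1}\,m^{1+2k}\bigl(m+k/(\sqrt{3}\pi)\bigr)^{k}$ with some absolute constant $C_{1}$, working directly from the definition \eqref{eq:b}. The factor $(m+k/(\sqrt{3}\pi))^{k}$ should arise from a binomial estimate of the same flavor as the one above, and the $\sqrt{3}$ should emerge from a standard tail estimate (Euler--Maclaurin or direct comparison with $\zeta(2) = \pi^{2}/6$) applied to the series underlying $B$; the numerical constant $615$ then absorbs the various powers of $2\pi$ and $\zeta$-values produced along the way. Combining this with the lower bound from the previous step, and using $e^{-\sqrt{-D}\pi(m-1/2)} \leq 1$ for $m\geq 1$, the right-hand side of the inequality in Theorem \ref{main-theorem} is bounded above by
\[
\tfrac{2}{\pi}\log\!\left(615\,c\,m^{1+k}\bigl(m+\tfrac{k}{\sqrt{3}\pi}\bigr)^{k}\tfrac{\sum_{n=1}^{m}|a_{n}|}{|a_{m}|}\right),
\]
which is exactly the second entry of the maximum.

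The final ``in particular'' assertion follows by monotonicity: once both threshold conditions are satisfied for $D$, the simplified right-hand side is a non-increasing function of $|D|$ (the first entry is manifestly so, and the second entry does not depend on $D$ at all), so any fundamental $D'$ with $|D'|\geq|D|$ still satisfies the hypotheses of Theorem \ref{main-theorem}, hence $\widehat{H}_{D',F}(x)$ is irreducible. The principal obstacle is the second step: extracting the explicit constants $615$ and $k/(\sqrt{3}\pi)$ requires carefully tracking the shape of $B$ in \eqref{eq:b} and bookkeeping the tail estimates for its constituent sums, rather than any new conceptual ingredient.
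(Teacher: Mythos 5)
Your first step (the lower bound $|S|\geq 2m^k-(m+k/(\sqrt{-D}\pi))^k$ via $|(-2k+r)_{k-r}|\leq(2k)^{k-r}$ and the binomial theorem, and its equivalence with the first threshold) is exactly the paper's argument and is correct. The second step, however, contains a genuine gap: you cannot discard the factor $e^{-\sqrt{-D}\pi(m-1/2)}$ by bounding it by $1$ and then hope for a majorant $B\leq C_1 m^{1+2k}(m+k/(\sqrt{3}\pi))^k$ with an absolute constant $C_1$. The quantity $B=B_0+\cdots+B_4$ from \eqref{eq:b} is \emph{not} polynomially bounded in $m$, nor bounded in $D$: $B_1$ carries a factor $e^{4\pi m/\sqrt{3}}$, and $B_3=(m+\tfrac{k}{\sqrt{3}\pi})^k e^{\sqrt{-D}\pi(m-1)}$ and $B_4=(m+\tfrac{k}{\sqrt{3}\pi})^k e^{\sqrt{-D}\pi m/2}$ grow exponentially in $\sqrt{-D}$. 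The whole point of the exponential factor is to cancel these: the paper bounds the \emph{products} $B_i e^{-\sqrt{-D}\pi(m-1/2)}$, using that $m\mapsto e^{4\pi m/\sqrt{3}-\sqrt{-D}\pi(m-1/2)}$ is decreasing once $-D\geq 15$ (which forces a separate, trivial treatment of the fundamental discriminants with $|D|<15$ via class number one), and that $B_3 e^{-\sqrt{-D}\pi(m-1/2)}=(m+\tfrac{k}{\sqrt{3}\pi})^k e^{-\sqrt{-D}\pi/2}$ and $B_4 e^{-\sqrt{-D}\pi(m-1/2)}=(m+\tfrac{k}{\sqrt{3}\pi})^k e^{-\sqrt{-D}\pi(m-1)/2}$ are at most constants times $(m+\tfrac{k}{\sqrt{3}\pi})^k$. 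Only after this cancellation do the individual contributions sum to the constant $615$ in front of $m^{1+2k}(m+\tfrac{k}{\sqrt{3}\pi})^k$ (which, divided by the $m^k/c$ from step one, gives the $m^{1+k}$ in the statement). As a minor point, the $\sqrt{3}$ comes from $v\geq\sqrt{3}/2$ on the fundamental domain rather than from $\zeta(2)$.

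Your monotonicity argument for the final assertion is fine (indeed both entries of the maximum are independent of $D$, so the hypothesis for $D$ immediately implies it for any $D'$ with $|D'|\geq|D|$), but the core estimate needs to be redone as above before the corollary is established.
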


The paper is organized as following. In Section \ref{preliminaries} we review relevant background information including Masser's formula and a convenient form of Shimura reciprocity due to Schertz and we recall the Maass-Poincaré series. The proof of Theorem \ref{main-theorem} and Corollary \ref{main-coro} is subject of Section \ref{proof}. In Section \ref{example} we apply Corollary 1.3 in the specific example of $F=E_{10}/\Delta$.

\section*{Acknowledgements}

This project was written at the Cologne Young Researchers in Number Theory Program 2015. The authors wish to thank the organizer Larry Rolen, for his generous support and guidance throughout the project. Furthermore, they want to recognize the generous support and contribution from the DFG Grant D-72133-G-403-151001011 of Larry Rolen, which was funded under the Institutional Strategy of the University of Cologne within the German Excellence Initiative. They would also like to thank Michael Griffin for insightful conversations. Further, the authors wish to express their gratitude to Claudia Alfes, Kathrin Bringmann, and Michael H. Mertens for their valuable comments and correspondence. Lastly, they would like to thank the anonymous referees for their helpful comments.

\section{Preliminaries} \label{preliminaries}

\subsection{Differential operators}

Let $F$ be a weakly holomorphic form of weight $-2k$.
We apply the Maass raising operator, defined for $l\in\mathbb{N}$ and $\tau =u+iv$ by
\[
R_l:=\frac{1}{2\pi i}\frac{\partial}{\partial\tau}-\frac{l}{4\pi v},
\]
$k$ times to $F$ to get a non-holomorphic modular function of weight $0$
\begin{equation}
\label{pf}
P_F(\tau):=R_{-2}\circ\cdots \circ R_{-2k+2}\circ R_{-2k}(F)(\tau).
\end{equation}

The Maass raising operator maps a (not 	necessarily holomorphic) modular form of weight $k$ to a (possibly) non-holomorphic modular form of weight $k+2$. It is an easy way to raise a modular form of negative weight to a non-holomorphic modular function (see Bump \cite{bump1998automorphic}, note there is used another definition).

As in \cite{zagier2008elliptic}, one may easily check that

\begin{equation}
R_{-2}\circ\cdots \circ R_{-2k+2}\circ R_{-2k}(F)(\tau) = \sum_{r=0}^{k} \binom{k}{r} \frac{(k+r)_{k-r}}{(4\pi v)^{k-r}}\mathcal{D}^r(F)(\tau)
\label{Zag}
\end{equation}
where $(a)_m:=a(a+1)\cdots(a+m-1)$ is the Pochhammer symbol and $\mathcal{D}:=q\frac{d}{dq}$.

We then set
\begin{equation}
\widehat{H}_{D,F}(x):=\prod_{Q\in \mathcal{P}_{D}} (x-P_F(\tau_Q)),
\label{hdf}
\end{equation}
where $\tau_Q$ is the CM-point of the quadratic form $Q$. As noted in \cite{bruinier2013algebraic} on page 2 and \cite{bruinier2013class} on page 3, $\widehat{H}_{D,F}(x)\in\Q[x]$.

\subsection{Masser's formula}

If one applies the Maass raising operator $k$ times to a weakly holomorphic modular form of weight $-2k$, one gets a non-holomorphic modular function.
We would like to find a holomorphic modular function which has the same values at CM-points of certain quadratic forms as our non-holomorphic modular function to be able to apply Schertz's theorem (which is a special case of the general theory of Shimura reciprocity). The non-holomorphic modular function $P_F$ we obtain is an \emph{almost holomorphic modular form}. The ring of almost holomorphic modular forms is the ring of functions which transform as a modular form but instead of being holomorphic they are polynomials in $1/v$ with coefficients which are holomorphic. It is well-known that this ring is generated by $E_2(\tau)-3/\pi v, E_4(\tau), E_6(\tau)$ where $E_2$, $E_4$ and $E_6$ are the normalized Eisenstein series of weight $2$, $4$ and $6$. The modular form $E_2(\tau)-3/\pi v$ is also known as $E_2^*(\tau)$. For a survey of almost holomorphic modular functions, see e.g. Zagier \cite{zagier2008elliptic}.

Hence, we can express the almost holomorphic modular function $P_F$ as a polynomial in these three generators. Because $E_4$ and $E_6$ are holomorphic, the only function we have to consider is $E_2^*$.
Masser states a useful formula to find a (meromorphic) modular form which has the same values as $E_2^*$ at quadratic imaginary points in $\mathbb{H}$, see Appendix I in Masser \cite{masser1975elliptic}. For this purpose, let $\mathcal V$ be a system of representatives of $\SLZ\backslash\Gamma_{-D}$ for a discriminant $D<0$ with associated positive definite integral binary quadratic form $Q$ and corresponding CM point $\tau_Q$ and let $\Gamma_{-D}$ be the set of all primitive integral $2\times2$-matrices of determinant $-D$. Then we introduce the \emph{modular polynomial} by
\[
\Phi_{-D}(j(\tau),y):=\prod_{M\in\mathcal V} (y-j(M\tau)).
\]

By expanding this polynomial, we can define the numbers $\beta_{\mu, \nu}(\tau_Q)$ by
\[
\Phi_{-D}(x,y)=\sum_{\mu, \nu}\beta_{\mu, \nu}(\tau_Q)(x-j(\tau_Q))^\mu(y-j(\tau_Q))^\nu.
\]
Now we can state the relation which Masser gives.
\begin{proposition}[\cite{masser1975elliptic}]

If $Q$ is a positive definite integral binary quadratic form of discriminant $D<0$ and $\tau_Q$ is the associated CM point, then
\[
E_2^*(\tau_Q)=\left(4\frac{E_6}{E_4}+3\frac{E_4^2}{E_6}+6j\gamma\frac{E_6}{E_4}\right)(\tau_Q),
\]
where $\gamma=\frac{\beta_{4,0}-\beta_{3,1}-\beta_{1,3}-\beta_{0,4}}{\beta_{0,1}}$ if $D$ is special and $\gamma=\frac{\beta_{2,0}-\beta_{1,1}-\beta_{0,2}}{\beta_{0,1}}$ otherwise. $D$ is called special if it is of the form $D=-3d^2$.
\end{proposition}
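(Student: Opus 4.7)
The plan is to exploit the tautological identity $\Phi_{-D}(j(\tau),j(M\tau))=0$, valid for every $\tau\in\H$ and every $M\in\Gamma_{-D}$, and Taylor-expand it at the CM point $\tau_Q$. First I would identify a matrix $M_0\in\Gamma_{-D}$ fixing $\tau_Q$. Given $Q(x,y)=ax^2+bxy+cy^2$ with $Q(\tau_Q,1)=0$, the natural candidate is $M_0=\begin{pmatrix}-b&-2c\\ 2a&b\end{pmatrix}$, which has $\det M_0=-D$ and satisfies $M_0\tau_Q=\tau_Q$. Writing $\lambda:=2a\tau_Q+b$, the derivative of the Möbius action at the fixed point is $(M_0\tau)'|_{\tau_Q}=-D/\lambda^2$, and the higher derivatives of $M_0\tau$ at $\tau_Q$ are polynomials in $\lambda^{-1}$ and $D$.

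Next I would substitute $M=M_0$ into $\Phi_{-D}(j(\tau),j(M\tau))=0$ and expand both sides at $\tau=\tau_Q$, using the given series
\[
\Phi_{-D}(x,y)=\sum_{\mu,\nu}\beta_{\mu,\nu}(\tau_Q)(x-j(\tau_Q))^\mu(y-j(\tau_Q))^\nu
\]
together with the chain rule for $j\circ M_0$. Vanishing in each Taylor order yields a hierarchy of linear relations among the $\beta_{\mu,\nu}(\tau_Q)$ and the derivatives $\mathcal{D}^r j(\tau_Q)$. The entry point for $E_2$ is the order-$2$ relation: Ramanujan's identities combined with $j=E_4^3/\Delta$ make $\mathcal{D}^2 j(\tau_Q)$ an explicit linear polynomial in $E_2(\tau_Q)$ with coefficients in the ring generated by $E_4,E_6,j$ evaluated at $\tau_Q$. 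Substituting $E_2(\tau_Q)=E_2^*(\tau_Q)+3/(\pi v)$ (where $v=\Im\tau_Q$) converts this into a linear equation for $E_2^*(\tau_Q)$ whose coefficients involve exactly the $\beta_{\mu,\nu}$ with $\mu+\nu\leq 2$. Solving it and rearranging via $\mathcal{D}j=-jE_6/E_4$ produces the stated formula with $\gamma=(\beta_{2,0}-\beta_{1,1}-\beta_{0,2})/\beta_{0,1}$.

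The delicate part is the special case $D=-3d^2$. Here a matrix $M_0\in\Gamma_{-D}$ can be chosen so that the multiplier $(M_0\tau)'|_{\tau_Q}$ at the fixed point is a primitive cube root of unity; the resulting $\Z/3$-symmetry on Taylor expansions forces the vanishing of a whole slate of low-order $\beta_{\mu,\nu}$, and the order-$2$ relation degenerates to $0=0$. I would overcome this by pushing the expansion to order $4$ and carefully tracking which monomials survive after imposing the cube-root-of-unity symmetry; exactly the combination $\beta_{4,0},\beta_{3,1},\beta_{1,3},\beta_{0,4}$ (against $\beta_{0,1}$) remains, yielding $\gamma=(\beta_{4,0}-\beta_{3,1}-\beta_{1,3}-\beta_{0,4})/\beta_{0,1}$.

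The main obstacle I expect is the combinatorial bookkeeping in the special case: writing out the chain-rule expansions of $j\circ M_0$ through fourth order, isolating the multiplier $\zeta=e^{2\pi i/3}$ from its conjugate, and verifying that after collecting $\zeta$-symmetric terms the coefficient of $E_2^*(\tau_Q)$ is indeed nonzero with the claimed linear combination of $\beta$'s. The generic-case analysis, by contrast, is an essentially routine application of Ramanujan's differential equations to a second-order Taylor expansion.
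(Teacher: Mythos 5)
The paper does not prove this proposition: it is imported verbatim from Appendix~I of Masser's book \cite{masser1975elliptic}, so there is no in-paper argument to compare yours against. Judged on its own, your plan has the right general shape --- the formula does come from implicit differentiation of the modular equation at the diagonal CM point, and your use of Ramanujan's identities to write $\mathcal{D}^2 j$ as a linear function of $E_2$, followed by the substitution $E_2 = E_2^* + 3/(\pi v)$ absorbing the $1/v$ contribution from $M_0''(\tau_Q)$, is exactly the mechanism that produces $4E_6/E_4 + 3E_4^2/E_6 + 6j\gamma E_6/E_4$. But as written the proposal has concrete gaps beyond ``delicate bookkeeping.''

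First, your matrix $M_0=\begin{pmatrix}-b&-2c\\ 2a&b\end{pmatrix}$ lies in $\Gamma_{-D}$ (which consists of \emph{primitive} matrices) only when $b$ is odd: for $D\equiv 0\pmod 4$ all four entries are even, $M_0$ is twice a matrix of determinant $-D/4$, and the ``tautological identity'' $\Phi_{-D}(j(\tau),j(M_0\tau))=0$ is simply unavailable --- indeed, for even non-special $D$ one can check that the order of discriminant $D$ contains no primitive element of norm $|D|$, so $(j(\tau_Q),j(\tau_Q))$ need not even lie on the curve $\Phi_{-D}=0$. Your argument therefore covers at most the odd discriminants, and a different device is needed for the rest. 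Second, the multiplier of $M_0$ at $\tau_Q$ is not generic: since $2a\tau_Q+b=\sqrt{D}$ and $\det M_0=-D$, it equals exactly $-1$; a direct order-two expansion then yields the combination $\beta_{2,0}-\beta_{1,1}+\beta_{0,2}$, not the stated $\beta_{2,0}-\beta_{1,1}-\beta_{0,2}$, so either a sign in your setup or a normalization of Masser's $\beta_{\mu,\nu}$ must be reconciled --- this is asserted rather than derived in your sketch. Third, in the special case $D=-3d^2$ the extra fixed-point matrices come from the primitive elements $d(\pm 3+\sqrt{-3})/2$ of norm $|D|$, whose multipliers are primitive \emph{sixth} roots of unity, not cube roots; and the vanishing of the $\beta_{\mu,\nu}$ with $\mu+\nu\le 2$ (equivalently, that the diagonal point is a multiple point of $\Phi_{-D}=0$) must be proved by counting the inequivalent primitive fixing matrices, not inferred from a symmetry heuristic. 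Until the order-two and order-four expansions are actually carried out and these three points are settled, the proposal is a plausible strategy rather than a proof.
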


Thus we get the following.

\begin{lemma} \label{prop-masser}
For every discriminant $D<0$, there exists a (meromorphic) modular function $M_{D,F}$ such that
\[
P_F(\tau_Q)=M_{D,F}(\tau_Q)
\]
for all positive definite integral binary quadratic forms $Q$ of discriminant $D$.
\end{lemma}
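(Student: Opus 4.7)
The plan is to combine two facts recalled in the preceding subsections: every almost holomorphic modular form on $\SLZ$ is a polynomial in $E_2^*, E_4, E_6$, and Masser's formula identifies $E_2^*(\tau_Q)$ with a rational expression in the values of $E_4, E_6, j$ at $\tau_Q$.

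First I would observe that, since each Maass raising operator lifts the weight by $2$, the definition \eqref{pf} exhibits $P_F$ as an almost holomorphic modular function of weight $0$ on $\SLZ$, meromorphic on $\H$ because $F$ is weakly holomorphic. Using the generation statement $\widetilde{M}_* = \C[E_2^*, E_4, E_6]$ recalled in the excerpt, applied in the weakly holomorphic setting (i.e.\ allowing inverse powers of $\Delta$), I would expand
\[
P_F(\tau) = \sum_{\ell=0}^{k} h_\ell(\tau)\, E_2^*(\tau)^\ell,
\]
where each $h_\ell$ is a meromorphic modular form on $\SLZ$ of weight $-2\ell$ --- concretely, a rational function in $E_4, E_6, j$ of the stated weight. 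That the degree in $E_2^*$ is at most $k$ follows from the explicit shape of \eqref{Zag} after substituting $1/(\pi v)=(E_2-E_2^*)/3$.

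Next I would plug in Masser's formula. For the fixed discriminant $D$, let $\tilde\gamma(\tau)$ denote the meromorphic modular function on $\H$ given by the rational expression in $j(\tau)$ that produces the quantity $\gamma$ of the preceding proposition when $\tau = \tau_Q$; such a rational expression exists because the numbers $\beta_{\mu,\nu}(\tau_Q)$ are Taylor coefficients of the fixed polynomial $\Phi_{-D}(x,y)$ evaluated at $(j(\tau_Q),j(\tau_Q))$, hence are themselves rational in $j(\tau_Q)$. Define
\[
G_D(\tau) := 4\frac{E_6}{E_4}(\tau) + 3\frac{E_4^2}{E_6}(\tau) + 6\,\tilde\gamma(\tau)\,j(\tau)\,\frac{E_6}{E_4}(\tau),
\]
a meromorphic modular form of weight $2$, and set
\[
M_{D,F}(\tau) := \sum_{\ell=0}^{k} h_\ell(\tau)\, G_D(\tau)^\ell,
\]
which is a meromorphic modular function since each summand has weight $-2\ell + 2\ell = 0$. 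By Masser's proposition, $G_D(\tau_Q) = E_2^*(\tau_Q)$ for every $Q \in \mathcal{Q}_D$, so term-by-term substitution yields $M_{D,F}(\tau_Q) = P_F(\tau_Q)$.

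The main obstacle I anticipate is justifying the expansion of $P_F$ in the first step: one needs the algebraic independence of $E_2^*, E_4, E_6$ over $\C$, so that $\widetilde{M}_*$ is a genuine polynomial ring, after which grading by weight forces the stated weight of each coefficient $h_\ell$. The rest is a weight-preserving substitution, and the essential conceptual point is that although Masser's $\gamma$ varies with $\tau_Q$, it varies only through $j(\tau_Q)$, which is what allows a single meromorphic modular function $M_{D,F}$ to interpolate $P_F$ simultaneously at every CM-point of discriminant $D$.
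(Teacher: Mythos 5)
Your proposal is correct and follows essentially the same route as the paper, which derives the lemma directly from the two facts you use: the expression of the almost holomorphic modular function $P_F$ as a polynomial in $E_2^*$ with meromorphic modular coefficients, followed by substitution of Masser's formula for $E_2^*(\tau_Q)$. The paper leaves the details implicit ("Thus we get the following"), and your write-up supplies exactly the missing bookkeeping, including the key observation that $\gamma$ depends on $\tau_Q$ only through $j(\tau_Q)$, so a single meromorphic $M_{D,F}$ interpolates $P_F$ at all CM points of discriminant $D$.
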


This lemma reduces us to studying classical modular functions, where work of Schertz applies.

\subsection{Maass-Poincaré series}

In the proof of Theorem \ref{main-theorem} we will have need of careful estimates for the coefficients of weakly holomorphic modular forms. This is conveniently provided by the theory of Maass-Poincaré series due to Niebur \cite{niebur1973class} and Fay \cite{fay1977fourier}, as further developed by many others (see e.g. \cite{bringmann2008coefficients,bringmann2007arithmetic}).

For $v>0$, $k\in\Z$ and $s\in\C$ we define
$$\mathcal{M}_{s,k}(v):=v^{-\tfrac{k}{2}} M_{-\tfrac{k}{2},s-\tfrac{1}{2}}\left(v\right)$$
where $M_{\nu,\mu}$ denotes the usual $M$-Whittaker function (see e.g. \cite{gradshteyn2000table}, p. 1014). Using this, we construct the following Poincaré series for $\GN$:
$$P_{m,s,k,N}(\tau) := \frac{1}{2\Gamma(2s)} \sum_{\gamma \in \Gamma_\infty \setminus \GN} \br{\mathcal M_{s,k}(4\pi m v) e^{-2\pi i m u}} \rvert_k \gamma.$$
Here $m\in\N$, $\tau=u+iv \in\H$, $\Re(s)>1$, $k\in -\N$, and $\Gamma_\infty:=\set{\pm \br{\begin{smallmatrix} 1&n\\0&1  \end{smallmatrix}} \mid n\in\Z}$.

In the special case when $k<0$ and $s=1-\tfrac{k}{2}$, the Poincaré series $P_{m,k,N}:=P_{m,s,k,N}$ defines a harmonic Maass form of weight $k$ for $\GN$ whose principal part at the cusp $\infty$ is given by $q^{-m}$, and at all other cusps the principal part is 0 (which is not of importance for our purposes because we are only interested in level $N=1$). For more background on Poincaré series and harmonic Maass forms see \cite{bruinier2004two,ono2009unearthing}. 

In this situation, we can give explicit Fourier expansions.
In order to do so we agree on the following notation.
By $I_s$ and $J_s$ we denote the usual $I$- and $J$-Bessel functions, and $K(m,l,c)$ is the usual Kloosterman sum
$$K(m,l,c):= \sum_{d {\pmod c}^*} \exp{ \br { 2\pi i \br{ \frac{m\overline d + ld} {c}} }},$$
where $d$ runs through the residue classes modulo $c$ which are coprime to $c$ and $\overline d$ denotes the multiplicative inverse of $d$ modulo $c$.

\begin{proposition}[\cite{ono2009unearthing}, Theorem 8.4] \label{poincare-exp} 
For $m,N\in\N$, $k\in -\N$ and $\tau\in\H$ we have
$$(1-k)! \: P_{m,k,N} (\tau) = (k-1) (\Gamma (1-k, 4\pi mv) - \Gamma(1-k)) q^{-m} + \sum_{l\in\Z} \b q^l,$$
where the incomplete gamma function $\Gamma(\alpha,x)$ is defined as
$$\Gamma(\alpha,x) := \int_x^\infty e^{-t} t^{\alpha-1} dt$$
and the coefficients $\b$ are as follows.

\begin{enumerate}[1)]
\item If $l<0$, then
\begin{align*}
\b = 2\pi i^{2-k} (k-1) \Gamma(1-k,-4\pi l v) \br{-\frac{l}{m}}^{\frac{k-1}{2}}\\
\times \sum_{c=1}^\infty \frac{K(-m,l,cN)}{cN} J_{1-k} \br{\frac{4\pi \sqrt{-ml}}{cN}}.
\end{align*}
\item If $l>0$, then
\begin{align*}
\b = -2\pi i^{2-k} (1-k)! l^{\frac{k-1}{2}} m^{\frac{1-k}{2}} \sum_{c=1}^\infty \frac{K(-m,l,cN)}{cN} I_{1-k} \br{\frac{4\pi \sqrt{ml}}{cN}}.
\end{align*}
\item If $l=0$, then
\begin{align*}
\b = -(2\pi i)^{2-k} m^{1-k} \sum_{c=1}^\infty \frac{K(-m,0,cN)}{(cN)^{2-k}}.
\end{align*}
\end{enumerate}

\end{proposition}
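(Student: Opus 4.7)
The plan is to derive the Fourier expansion by the classical unfolding computation for Poincaré series, carried out at the special spectral parameter $s=1-k/2$. First I would decompose $\Gamma_\infty\backslash\Gamma_0(N)$ using Bruhat: every non-identity coset has a representative $\gamma=\begin{pmatrix} a & b \\ c & d\end{pmatrix}$ with $c\geq 1$, $N\mid c$, and $d$ ranging over residues modulo $c$ coprime to $c$. The identity coset simply contributes the seed $\mathcal{M}_{s,k}(4\pi m v)e^{-2\pi i mu}$; using the known reduction of the $M$-Whittaker function at $s=1-k/2$ to a combination of $q^{-m}$ and an incomplete gamma integral, this piece yields exactly the explicit term $(k-1)(\Gamma(1-k,4\pi mv)-\Gamma(1-k))q^{-m}$ after multiplying by $(1-k)!$ and the normalization $\tfrac{1}{2\Gamma(2s)}$.

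Second, I would treat the sum over $c\geq 1$. For fixed $c$, writing $\gamma\tau=\frac{a}{c}-\frac{1}{c^2(\tau+d/c)}$ and combining the sum over $d\bmod c$ with the integral over $u\in\mathbb{R}$ (arising from computing the $l$-th Fourier coefficient via $\int_0^1 \cdot\, e^{-2\pi i l u}du$ and then unfolding the $d$-translates to an integral over all of $\mathbb{R}$) produces the Kloosterman sum $K(-m,l,cN)$ as the coefficient of a single real integral. The surviving integral has the shape
\[
\int_{-\infty}^\infty \mathcal{M}_{s,k}\!\left(\frac{4\pi m v}{c^2|u+iv|^2}\right) \exp\!\left(\frac{2\pi i m u}{c^2(u^2+v^2)}\right)(cu-icv)^{-k}\,e^{-2\pi i l u}\,du,
\]
which is the heart of the calculation.

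The main obstacle, and the step I expect to be the most technically involved, is the explicit evaluation of this integral as a Bessel function. To handle it I would use the integral representation of $M_{-k/2,1/2-k/2}$ together with a contour shift $u\mapsto u+iv$ reducing the integrand to a Hankel-type integral. Standard Bessel-integral identities (for instance the formulas in Gradshteyn–Ryzhik relating such integrals to $I_\nu$ and $J_\nu$) then yield $I_{1-k}(4\pi\sqrt{ml}/cN)$ when $l>0$, and $\Gamma(1-k,-4\pi lv)\,J_{1-k}(4\pi\sqrt{-ml}/cN)$ when $l<0$ (the incomplete gamma factor accounting for the non-holomorphic growth of the negative-index coefficients), and a pure power of $c$ when $l=0$, which after the $c$-summation collapses to the displayed Ramanujan-type sum.

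Finally, I would collect constants: the prefactor $\frac{1}{2\Gamma(2s)}$ at $s=1-k/2$ combines with the $\Gamma$-factors from the Whittaker-to-Bessel reduction to produce $(1-k)!$, and the phase $i^{2-k}$ arises from the branch of $(cu-icv)^{-k}$ on the real line. Matching signs in the three cases and factoring out the common $\sqrt{|l|/m}$-powers yields precisely the expressions in 1), 2), 3). The convergence of the $c$-sum at $s=1-k/2$ is justified by the Weil bound for Kloosterman sums combined with the exponential decay of $I_{1-k}$ for small argument, so all manipulations are valid term-by-term.
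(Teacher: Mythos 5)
This proposition is not proved in the paper at all: it is imported verbatim as Theorem 8.4 of the cited survey of Ono, which in turn rests on the computations of Niebur, Fay and Hejhal. Your unfolding argument is exactly the standard proof found in those sources: split off the identity coset of $\Gamma_\infty\backslash\Gamma_0(N)$ (which, via the reduction of $M_{-k/2,\,1/2-k/2}$ at the special point $s=1-k/2$ to incomplete gamma functions, produces the term $(k-1)(\Gamma(1-k,4\pi mv)-\Gamma(1-k))q^{-m}$ once the normalization $\tfrac{1}{2\Gamma(2-k)}$ is cancelled against $(1-k)!$), unfold the remaining cosets against $\int_0^1 e^{-2\pi i lu}\,du$ to extract $K(-m,l,cN)$, and evaluate the surviving real integral by Whittaker--Bessel identities, with $J_{1-k}$ appearing when $l$ has the same sign as the seed frequency $-m$ (i.e.\ $l<0$) and $I_{1-k}$ when the signs are opposite. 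So the route is correct and coincides with the proof in the reference; it is not an alternative to anything in this paper. Two small corrections are in order. First, no Weil bound is needed, and the claimed ``exponential decay of $I_{1-k}$ for small argument'' is false: $I_\nu(x)\sim (x/2)^\nu/\Gamma(\nu+1)$ as $x\to 0^{+}$, which is only polynomial in $1/c$. But since $k\le -1$ forces $s=1-\tfrac{k}{2}\ge \tfrac32>1$, the trivial bound $|K(-m,l,c)|\le c$ together with this polynomial decay already gives $\sum_c c^{\,k-1}<\infty$, so absolute convergence (hence the interchange of sum and integral, and term-by-term unfolding) is justified without Weil and without analytic continuation in $s$. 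Second, after the shift $u\mapsto u-d/c$ the automorphy factor is $(cu+icv)^{-k}$, not $(cu-icv)^{-k}$; this only affects the bookkeeping that produces the phase $i^{2-k}$, but it should be stated correctly since that phase is part of the claimed formula.
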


\subsection{Work of Schertz}
In this subsection we review work of Schertz which gives a convenient description of the singular moduli of modular functions.
\begin{definition}
Let $N\in\mathbb N$ and $D=t^2d<0$ be a discriminant with $t\in\mathbb N$ and $d$ a fundamental discriminant. Moreover, let $\{Q_1,\dotsc,Q_h\}$ be a system of representatives of primitive quadratic forms modulo $\SLZ$. We call the set $\{Q_1,\dotsc,Q_h\}$ an \emph{$N$-system mod $t$} if  for each $Q_i(x,y)=a_ix^2+b_ixy+c_iy^2$ the conditions
$$\gcd(c_j,N)=1 \quad \text{and} \quad  b_j\equiv b_l\pmod{2N}, \quad 1\leq j,l\leq h$$
are satisfied.
\end{definition}
\begin{theorem}[Schertz,\cite{schertz2002weber}] \label{schertz} 
Let $g$ be a modular function for $\Gamma_0(N)$ for some $N\in\mathbb N$ whose Fourier coefficients at all cusps lie in the $N$th cyclotomic field. Suppose furthermore that $g(\tau)$ and $g(-\frac{1}{\tau})$ have rational Fourier coefficients, and let $Q(x,y)=ax^2+bxy+cy^2$ be a quadratic form with discriminant $D=t^2d$, $d$ a fundamental discriminant, with $\gcd(d,N)=1$ and $N|a$. Then, unless $g$ has a pole at $\tau_Q$, we have that $g(\tau_Q)\in\Omega_t$, where $\Omega_t$ is the ring class field of the order of conductor $t$ in $\mathbb Q(\sqrt{d})$. Moreover if $\{Q=Q_1,Q_2,\dotsc,Q_h\}$ is an $N$-system mod $t$, then
$$\{g(\tau_{Q_1}),\dotsc,g(\tau_{Q_h})\}=\{\sigma(g(\tau_{Q_1})):\sigma\in\Gal_D\}$$ 
where $\Gal_D$ is the Galois group of $\Omega_t/\mathbb Q(\sqrt{D})$.
\end{theorem}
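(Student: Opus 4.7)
The plan is to deduce the theorem from the Shimura reciprocity law, which governs the Galois action on values of modular functions at CM points via an explicit adelic matrix action on the modular function field of level $N$, denoted $\mathcal{F}_N$.

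First, I would recall from classical complex multiplication that for every CM point $\tau$ of discriminant $D=t^2d$, the singular modulus $j(\tau)$ generates the ring class field $\Omega_t$ of the order $\mathcal{O}_t:=\Z+t\mathcal{O}_{\Q(\sqrt{d})}$ over $\Q(\sqrt{d})$. Since $g$ is modular for $\GN$ with all Fourier coefficients (at every cusp) in $\Q(\zeta_N)$, we have $g\in\mathcal{F}_N$, so $g(\tau_Q)$ a priori lies in a compositum involving $\Omega_t$ and roots of unity. It remains to show that (i) the cyclotomic contribution collapses, and (ii) the $\Gal(\Omega_t/\Q(\sqrt{d}))$-conjugates are obtained by evaluating $g$ at the other $\tau_{Q_j}$ in the $N$-system.

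Second, attach to $Q(x,y)=ax^2+bxy+cy^2$ the embedding $\iota_Q:\Q(\sqrt{d})\hookrightarrow M_2(\Q)$ coming from the action of $\mathcal{O}_t$ on the ordered basis $(\tau_Q,1)$ of the corresponding proper $\mathcal{O}_t$-ideal. Shimura reciprocity asserts that for every finite idele $s\in\Q(\sqrt{d})_{\mathbb{A}}^{\times,f}$, the Artin automorphism $\sigma=[s,\Q(\sqrt{d})]$ acts by $\sigma(f(\tau_Q))=f^{\iota_Q(s)^{-1}}(\tau_Q)$ for every $f\in\mathcal{F}_N$. The hypothesis $N\mid a$, combined with $\gcd(d,N)=1$, forces $\iota_Q(\mathcal{O}_t\otimes\widehat\Z)$ to land inside the preimage of the mod-$N$ Borel under $\GL_2(\widehat\Z)\to\GL_2(\Z/N\Z)$, so that for any $s$ the matrix $\iota_Q(s)^{-1}$ lies in $\GN\cdot\GL_2(\widehat\Z)$ and hence fixes $g$. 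This already yields $g(\tau_Q)\in\Omega_t$. The assumption that both $g(\tau)$ and $g(-1/\tau)$ have rational Fourier coefficients then kills the residual cyclotomic part of the action (arising from the determinant map on ideles and the involution $S$ which exchanges the cusps $0$ and $\infty$), so the Galois orbit is genuinely a $\Gal(\Omega_t/\Q(\sqrt{d}))$-orbit.

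Third, to identify the conjugates explicitly, I would use the standard bijection between $\mathrm{Pic}(\mathcal{O}_t)$ and $\mathcal{P}_D/\SLZ$ sending an ideal class to its associated primitive form. Combined with the Artin isomorphism $\mathrm{Pic}(\mathcal{O}_t)\cong\Gal(\Omega_t/\Q(\sqrt{d}))$, the automorphism $\sigma_j$ associated to the class of $Q_j$ sends $\tau_{Q_1}$ to some $\SLZ$-translate of $\tau_{Q_j}$. The $N$-system conditions $\gcd(c_j,N)=1$ and $b_j\equiv b_l\pmod{2N}$ are precisely what guarantees that the connecting matrix lies in $\GN$ rather than merely in $\SLZ$. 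Since $g$ is $\GN$-invariant, this gives $\sigma_j(g(\tau_{Q_1}))=g(\tau_{Q_j})$, and the number of such $Q_j$ matches $|\Gal(\Omega_t/\Q(\sqrt{d}))|=h$.

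The main obstacle is the local-global bookkeeping at primes $p\mid N$: one must verify that the explicit $N$-system conditions on the coefficients $b_j,c_j$ correspond precisely to the abstract adelic condition that the matrices implementing the Artin action reduce modulo $N$ into $\GN$. Concretely, this requires matching the decomposition of the ideal class representatives in $\mathcal{O}_t\otimes\Z_p$ to reductions of $\iota_{Q_j}$ mod $N$, and it is the step where the two technical hypotheses of the $N$-system definition are used in an essential way.
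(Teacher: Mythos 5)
This is a statement the paper imports verbatim from Schertz \cite{schertz2002weber}; no proof is given in the paper itself, so there is nothing internal to compare against. Your route --- Shimura reciprocity in its adelic form, the First Main Theorem of CM for $j(\tau_Q)$, the determinant/cyclotomic analysis to use the rationality of the expansions of $g(\tau)$ and $g(-1/\tau)$, and the dictionary between $\mathrm{Pic}(\mathcal{O}_t)$ and primitive forms of discriminant $t^2d$ --- is exactly the strategy Schertz uses, so as a plan it is the right one and contains no wrong turns.

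That said, as a proof it is incomplete precisely where the theorem has its content. You flag as ``the main obstacle'' the verification that the two $N$-system conditions ($\gcd(c_j,N)=1$ and $b_j\equiv b_l\pmod{2N}$) translate the abstract Artin action into the concrete identity $\sigma_j(g(\tau_{Q_1}))=g(\tau_{Q_j})$; that verification \emph{is} the proof, and deferring it leaves the second assertion of the theorem unestablished. Two smaller points to repair when you carry it out. First, your phrasing that the conditions force ``the connecting matrix to lie in $\Gamma_0(N)$ rather than merely in $\SLZ$'' is not quite the right mechanism: the actual argument works with the ideals $\mathfrak{a}_j=\bigl(a_j,\tfrac{-b_j+\sqrt{D}}{2}\bigr)$ and chooses idele representatives of the classes $\mathfrak{a}_1\mathfrak{a}_j^{-1}$ whose local components at primes dividing $N$ act trivially on $g$; the congruence $b_j\equiv b_l\pmod{2N}$ is what makes such a choice possible simultaneously for all $j$, and the coprimality condition controls the local units. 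Second, the claim that $N\mid a$ together with $\gcd(d,N)=1$ puts $\iota_Q(\mathcal{O}_t\otimes\widehat{\Z})$ into the correct Borel needs an explicit computation with the regular representation on the basis $(\tau_Q,1)$, since whether one lands in the upper- or lower-triangular subgroup (i.e.\ whether one needs $N\mid a$, $\gcd(a,N)=1$, or $\gcd(c,N)=1$) depends on conventions and is exactly the sort of detail that silently breaks if left unchecked; this is also where the hypothesis on $g(-1/\tau)$ enters, via the Fricke involution exchanging the cusps $0$ and $\infty$.
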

\begin{remark}
Schertz also proves constructively that an $N$-system mod $t$ always exists.
\end{remark}

\section{Proof of Theorem \ref{main-theorem}} \label{proof}
Now we have all the needed preliminaries and begin the proof. We proceed as in \cite{mertens2015class}.
\begin{lemma}\label{estilem}
Suppose we have a weakly modular form $F$ of weight $-2k$ with principal part 
$$\sum_{n=1}^{m}a_n q^{-n}.$$
Then
$$R_{-2}\circ\dotsc\circ R_{-2k}(F)(\tau)=R_{-2}\circ\dotsc\circ R_{-2k}\left(\sum_{n=1}^{m}a_n q^{-n}\right)(\tau)+E_F(\tau)$$
where $|E_F(\tau)|$ is uniformly bounded by the constant in \eqref{eq:final}.
\end{lemma}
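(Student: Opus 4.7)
The strategy is to split $F$ into its principal part plus a holomorphic-at-infinity remainder, apply the iterated Maass raising operator by linearity, and then estimate the contribution of the remainder by combining Zagier's identity \eqref{Zag} with explicit Fourier coefficient bounds coming from the Maass--Poincar\'e series expansion in Proposition~\ref{poincare-exp}.

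First, decompose $F(\tau) = \sum_{n=1}^{m} a_n q^{-n} + G(\tau)$, where $G(\tau) = \sum_{n \geq 0} c_n q^n$ is the holomorphic-at-$\infty$ remainder. Since $\SLZ$ admits no nonzero holomorphic modular forms of negative weight, $F$ is uniquely determined by its principal part; consequently $G$ must coincide with the holomorphic part of a suitable linear combination of the Maass--Poincar\'e series $P_{n,-2k,1}$ (indexed by $n=1,\dotsc,m$ with coefficients proportional to $a_n$), and in particular its positive-index Fourier coefficients $c_l$ are given by the explicit formula in case~(2) of Proposition~\ref{poincare-exp}.

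By $\C$-linearity of each $R_l$, setting $E_F(\tau) := R_{-2}\circ\cdots\circ R_{-2k}(G)(\tau)$ is exactly the error term in the statement. The identity \eqref{Zag} then rewrites this as
\[
E_F(\tau) = \sum_{r=0}^{k} \binom{k}{r}\frac{(k+r)_{k-r}}{(4\pi v)^{k-r}} \sum_{n\geq 0} n^r c_n q^n,
\]
so it suffices to give a uniform bound on the absolute value of this finite sum of inner series. To do so, I would bound $|c_n|$ via case~(2) of Proposition~\ref{poincare-exp}: the Weil bound $|K(-n',n,c)| \leq \tau(c)\sqrt{c}\sqrt{\gcd(n',n,c)}$ on Kloosterman sums together with the standard estimates $I_{2k+1}(x) \leq C \min\{x^{2k+1},\, e^x/\sqrt{x}\}$ on the $I$-Bessel function (applied respectively in the large-$c$ small-argument regime and the small-$c$ exponential-growth regime) yield an explicit majorant of the form $|c_n| \leq C_{m,k}\sum_{n'=1}^{m} |a_{n'}| \, e^{4\pi\sqrt{n n'}}$ up to a mild polynomial correction in $n$.

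Substituting back, using $|q^n| = e^{-2\pi n v}$, and invoking that every $\tau_Q$ in the standard fundamental domain for $\SLZ$ satisfies $v \geq \sqrt{3}/2$, the series $\sum_n n^r e^{-2\pi n v + 4\pi\sqrt{n m}}$ converges and can be dominated by a closed-form majorant; taking the maximum over $r=0,\dotsc,k$ and collecting the resulting numerical factors yields the constant advertised in \eqref{eq:final}. The main obstacle will be numerical rather than conceptual: carefully tracking the constants emerging from the Weil and Bessel estimates through the summation in $n$ and $r$, so that the final majorant matches the specific form stated in \eqref{eq:final}, and handling the $r=0$ piece (which is the only one to see the constant term $c_0$ of $G$) so that it is absorbed cleanly without inflating the overall bound.
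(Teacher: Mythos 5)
Your proposal follows essentially the same route as the paper: express the non-principal part of $F$ through the Maass--Poincar\'e series $P_{n,-2k,1}$ (using that a harmonic form of negative weight with trivial principal part vanishes), bound the resulting Fourier coefficients via Kloosterman-sum and $I$-Bessel estimates from Proposition~\ref{poincare-exp}, and push the bound through \eqref{Zag} using $v\ge\sqrt{3}/2$ on the fundamental domain. The only cosmetic difference is that the paper uses the trivial bound $\left|K(-n,l,c)\right|\le c$ rather than the Weil bound, and it is the trivial bound (together with $\zeta(1+2k)$ from the sum over $c$) that produces the exact constants appearing in \eqref{eq:final}.
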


\begin{proof}As the space of cusp forms of weight zero for the full modular group is zero, we can write $F$ as a sum of Poincaré series, namely as
$$F=\sum_{n=1}^{m}\frac{a_n}{(1+2k)!} P_{n,-2k,1}.$$
Hence,
$$|E_F|\leq\sum_{n=1}^{m}\left|\frac{a_n}{(1+2k)!} E_{P_{n,-2k,1}}\right|.$$
We can estimate the Fourier coefficients of a given Poincaré series by using that for $v>-1/2$ and $0\leq x \leq 1$ we have the well-known bounds (see e.g., eq. (6.25) in \cite{luke1972inequalities} and eq. (8.451.5) in \cite{gradshteyn2000table})
$$I_v(x)\leq\frac{2}{\Gamma(v+1)}\left(\frac{x}{2}\right)^v$$
and for $x\geq 1$
$$I_v(x)\leq\frac{e^x}{\sqrt{2\pi x}}.$$

Using this we find for $c\geq 4\pi\sqrt{nl}$ that
$$I_{1+2k}\left(\frac{4\pi\sqrt{nl}}{c}\right)\leq\frac{2}{\Gamma(2+2k)}\left(\frac{4\pi\sqrt{nl}}{2c}\right)^{1+2k}=\frac{2^{2+2k}\pi ^{1+2k}(nl)^{1/2+k}}{c^{1+2k}(1+2k)!}$$
and for $c< 4\pi\sqrt{nl}$ that
$$I_{1+2k}\left(\frac{4\pi\sqrt{nl}}{c}\right)\leq\frac{\sqrt{c}e^{\frac{4\pi\sqrt{nl}}{c}}}{2\pi\sqrt{2}\sqrt[4]{nl}}.$$

Using the trivial bound $|K(-n,l,c)|\leq c$ one obtains
\begin{align*}
|b_{n, -2k, 1}(l, v)| &= 2\pi (1+2k)! l^{\frac{-2k-1}{2}} n^{\frac{1+2k}{2}} \left|\sum_{c=1}^{\infty} \frac{K(-n, l, c)}{c} I_{1+2k}\left(\frac{4\pi\sqrt{nl}}{c}\right)\right| \\
										&\leq 2\pi (1+2k)! l^{-1/2-k} n^{1/2+k} \sum_{c=1}^\infty \left|I_{1+2k}\left(\frac{4\pi\sqrt{nl}}{c}\right)\right|.\\
\end{align*}		
Applying the estimates for the Bessel functions	stated above we get		
\begin{align*}
&\sum_{c=1}^\infty \left|I_{1+2k}\left(\frac{4\pi\sqrt{nl}}{c}\right)\right|\\
=&\sum_{c<4\pi\sqrt{nl}}\left|I_{1+2k}\left(\frac{4\pi\sqrt{nl}}{c}\right)\right|+\sum_{c\geq 4\pi\sqrt{nl}}\left|I_{1+2k}\left(\frac{4\pi\sqrt{nl}}{c}\right)\right|\\
\leq&\frac{2^{2+2k}\pi ^{1+2k}(nl)^{1/2+k}}{(1+2k)!} \zeta(1+2k)+2\sqrt{2\pi nl}e^{4\pi\sqrt{nl}}.
\end{align*}

Finally, one obtains
$$|b_{n, -2k, 1}(l, v)|\leq 4\sqrt{2}\pi^{3/2}(1+2k)!l^{-k}n^{1+k}e^{4\pi\sqrt{nl}}+2^{3+2k}\pi^{2+2k} n^{1+2k}\zeta(1+2k).$$					

For the constant term of the Fourier-expansion we get
\begin{align*}
|b_{n, -2k, 1}(0, v)| &= \left|(2\pi)^{2+2k} n^{1+2k} \sum_{c=1}^{\infty} \frac{K(-n, 0, c)}{c^{2+2k}}\right|\\
											&\leq (2\pi)^{2+2k} n^{1+2k} \zeta(1+2k). 
\end{align*}
Continuing, by (\ref{Zag}) we obtain
\begin{align*}\left|\frac{E_{P_{n,-2k,1}}}{(1+2k)!}\right|&=\left|R_{-2}\circ\dotsc \circ R_{-2k}\left(\sum_{l=0}^{\infty}\frac{b_{n,-2k,1}(l)}{(1+2k)!}q^l\right)\right|\\
&=\left|\sum_{r=0}^k(-1)^{k-r}\binom{k}{r}\frac{(-2k+r)_{k-r}}{(4\pi v)^{k-r}}\mathcal{D}^r\left(\sum_{l=0}^{\infty}\frac{b_{n,-2k,1}(l)}{(1+2k)!}q^l\right)\right|\\
&\leq\sum_{r=0}^k\binom{k}{r}\frac{(2k)^{k-r}}{(|4\pi v|)^{k-r}}\sum_{l=0}^{\infty}l^r \left|\frac{b_{n,-2k,1}(l)}{(1+2k)!}q^l\right|.
\end{align*}
Since $\tau$ lies in the closure of the usual fundamental domain of the full modular group, i.e. $\tau\in \overline{\lbrace z\in\mathbb H \mid \abs{z}>1,\abs{\Re(z)}<\frac{1}{2}\rbrace}$ and hence $v\geq\sqrt{3}/2$, we can estimate
\begin{align*}
\left|\frac{E_{P_{n,-2k,1}}}{(1+2k)!}\right|&\leq\frac{(2\pi)^{2+2k}n^{1+2k}\zeta(1+2k)}{(1+2k)!}+\left|\sum_{r=0}^k\sum_{l=1}^{\infty}\binom{k}{r}\frac{(k)^{k-r}}{(\sqrt{3}\pi )^{k-r}}l^r \frac{b_{n,-2k,1}(l)}{(1+2k)!}q^l\right|\\
&\leq\frac{(2\pi)^{2+2k}n^{1+2k}\zeta(1+2k)}{(1+2k)!}+\sum_{l=1}^{\infty}\left|\frac{b_{n,-2k,1}(l)}{(1+2k)!}q^l\right|\sum_{r=0}^k\binom{k}{r}\frac{(k)^{k-r}l^r}{(\sqrt{3}\pi )^{k-r}}\\
&=\frac{(2\pi)^{2+2k}n^{1+2k}\zeta(1+2k)}{(1+2k)!}+\sum_{l=1}^{\infty}\left|\frac{b_{n,-2k,1}(l)}{(1+2k)!}\right||q|^l\left(l+\frac{k}{\sqrt{3}\pi}\right)^k.\\
\end{align*}
Now we use the estimates for the coefficients of Poincaré series which were established above and get
$$\left|\frac{E_{P_{n,-2k,1}}}{(1+2k)!}\right|\leq\frac{(2\pi)^{2+2k}n^{1+2k}\zeta(1+2k)}{(1+2k)!}+A_1+A_2,$$
where 
$$A_1:=4\pi\sqrt{2\pi}n^{1+k}\sum_{l=1}^{\infty}\left(1+\frac{k}{\sqrt{3}\pi l}\right)^k e^{4\pi\sqrt{nl}-\sqrt{3}\pi l}$$
and 
$$A_2:=\frac{2^{3+2k}\pi^{2+2k}n^{1+2k}\zeta(1+2k)}{(2k+1)!}\sum_{l=1}^{\infty}\left(l+\frac{k}{\sqrt{3}\pi}\right)^ke^{-\sqrt{3}\pi l}.$$

Now we estimate the first sum by
$$A_1\leq 4\pi\sqrt{2\pi}n^{1+k}\sum_{l=1}^{\infty}\left(1+\frac{k}{\sqrt{3}\pi}\right)^k e^{4\pi\sqrt{nl}-\sqrt{3}\pi l}.$$

Since for any real-valued function $f$ satisfying $f'(x)>0$ we have the inequality
$$\sum_{k=M}^N f(k)\leq\int_{M}^N f(x)dx+f(N)$$ 
and a similar result also holds if $f'(x)<0$ this last sum can be estimated against an integral
$$\sum_{l=1}^{\infty}e^{4\pi\sqrt{nl}-\sqrt{3}\pi l}\leq\int_{1}^{\infty}e^{4\pi\sqrt{n\mu}-\sqrt{3}\pi \mu}d\mu+e^{\frac{4\pi n}{\sqrt{3}}}.$$
The integral can be evaluated explicitly, which easily yields the following bound 
$$\int_{1}^{\infty}e^{4\pi\sqrt{n\mu}-\sqrt{3}\pi\mu}d\mu\leq\frac{4\sqrt{n}e^{\frac{4\pi n}{\sqrt{3}}}}{3^{3/4}}+\frac{e^{4\pi\sqrt{n}-\sqrt{3}\pi}}{\sqrt{3}\pi}.$$
Using these estimates, one obtains
\begin{align*}
A_1&\leq 4\pi\sqrt{2\pi}n^{1+k}\left(1+\frac{k}{\sqrt{3}\pi}\right)^k\left(\frac{4\sqrt{n}e^{\frac{4\pi n}{\sqrt{3}}}}{3^{3/4}}+\frac{e^{4\pi\sqrt{n}-\sqrt{3}\pi}}{\sqrt{3}\pi}+e^{\frac{4\pi n}{\sqrt{3}}}\right)\\
&\leq 12\pi\sqrt{2\pi}n^{3/2+k}\left(1+\frac{k}{\sqrt{3}\pi}\right)^ke^{\frac{4\pi n}{\sqrt{3}}}.
\end{align*}

For the second sum we first note that for $x\geq 0$ we have the inequality
$$\left(x+\frac{k}{\sqrt{3}\pi}\right)^ke^{-\sqrt{3}\pi x/2}\leq\left(\frac{2k}{\sqrt{3}\pi}\right)^ke^{-k/2}.$$
We can therefore estimate
$$A_2\leq\frac{2^{3+2k}\pi^{2+2k}n^{1+2k}\zeta(1+2k)}{(2k+1)!}\left(\frac{2k}{\sqrt{3}\pi}\right)^ke^{-k/2}\frac{e^{-\sqrt{3}\pi/2}}{1-e^{-\sqrt{3}\pi/2}}.$$
Thus,
\begin{equation}
\label{eq:final}|E_F|\leq\sum_{n=1}^{m}\left|a_n \frac{E_{P_{n,-2k,1}}}{(1+2k)!}\right|\leq \left|\frac{E_{P_{m,-2k,1}}}{(1+2k)!}\right|\sum_{n=1}^{m}|a_n|\leq\sum_{n=1}^{m}|a_n|\left(B_0+B_1+B_2\right),\end{equation}
where
\begin{align}
\label{eq:b0}
B_0&:=\frac{2^{3+2k}\pi^{2+2k}m^{1+2k}\zeta(1+2k)}{(1+2k)!}\leq 1064m^{1+2k},\\
\label{eq:b1}
B_1&:=24\pi\sqrt{2\pi}m^{3/2+k}\left(1+\frac{k}{\sqrt{3}\pi}\right)^ke^{\frac{4\pi m}{\sqrt{3}}}\leq 189m^{3/2+k}\left(1+\frac{k}{\sqrt{3}\pi}\right)^ke^{\frac{4\pi m}{\sqrt{3}}}
\end{align}
and
\begin{align}
\label{eq:b2}
B_2&:=\frac{2^{4+2k}\pi^{2+2k}m^{1+2k}\zeta(1+2k)}{(2k+1)!}\left(\frac{2k}{\sqrt{3}\pi}\right)^ke^{-k/2}\frac{e^{-\sqrt{3}\pi/2}}{1-e^{-\sqrt{3}\pi/2}}\leq 245m^{1+2k}\left(\frac{k}{\sqrt{3}\pi}\right)^k,
\end{align}
which completes the proof.

\end{proof}

We are now in position to prove Theorem \ref{main-theorem}.

\begin{proof}[Proof of Theorem \ref{main-theorem}]
By Theorem \ref{schertz} and Lemma \ref{prop-masser}, we immediateley see that the Galois group of $\Omega_t/\mathbb Q(\sqrt{D})$ acts transitively on the roots and therefore the polynomial must be a power of an irreducible polynomial. We now prove that for large enough $D$ there cannot be multiple roots. In \cite{zagier2008elliptic} it is stated that every $\SLZ$-equivalence class of quadratic forms with discriminant $D$ has a unique representative in $\mathcal{Q}_D$. It is easy to prove that in this set there is exactly one representative $[a,b,c]$, where $a=1$. Suppose that $P(\tau_{Q_1})=P(\tau_{Q_2})$ where $\tau_{Q_1}$ is the CM-point of the quadratic form with $a=1$. We will see that for large enough discriminant $-D$, the value at this point will be larger than the rest, so we cannot have multiple roots.
We set $e^{2\pi i\tau_{Q_j}}=:q_j$ for $j=1,2$.  One then obtains that
$$|q_1|=e^{\pi\sqrt{-D}} \quad\text{and}\quad |q_2|\leq e^{\pi\sqrt{-D}/2}.$$

Using Lemma \ref{estilem} we have
$$\left|R_{-2}\circ\dotsc\circ R_{-2k}\left(\sum_{n=1}^{m}a_n q^{-n}\right)(\tau_{Q_1})-R_{-2}\circ\dotsc\circ R_{-2k}\left(\sum_{n=1}^{m}a_n q^{-n}\right)(\tau_{Q_2})\right|\leq2\sum_{n=1}^{m}|a_n| E_n.$$
Hence one obtains
\begin{multline*}\left|R_{-2}\circ\dotsc\circ R_{-2k}\left(a_m q^{-m}\right)(\tau_{Q_1})\right|\leq2\sum_{n=1}^{m}|a_n| E_n\\
+\left|\sum_{r=0}^k(-1)^{k-r}\binom{k}{r}\frac{(-2k+r)_{k-r}}{(4\pi y)^{k-r}}\mathcal{D}^r\left(\sum_{n=1}^{m-1}a_n q^{-n}\right)(\tau_{Q_1})\right|\\+\left|\sum_{r=0}^k(-1)^{k-r}\binom{k}{r}\frac{(-2k+r)_{k-r}}{(4\pi y)^{k-r}}\mathcal{D}^r\left(\sum_{n=1}^{m}a_n q^{-n}\right)(\tau_{Q_2})\right|.
\end{multline*}

Using some estimates in the spirit of Lemma \ref{estilem} we get that
$$\left|\sum_{r=0}^k(-1)^{k-r}\binom{k}{r}\frac{(-2k+r)_{k-r}}{(4\pi v)^{k-r}}\sum_{n=1}^{m-1}(-n)^r a_n q_1^{-n}\right|\leq\left(m+\frac{k}{\sqrt{3}\pi}\right)^ke^{\sqrt{-D}\pi(m-1)}\sum_{n=1}^{m}|a_n|$$
and analogously
$$\left|\sum_{r=0}^k(-1)^{k-r}\binom{k}{r}\frac{(-2k+r)_{k-r}}{(4\pi v)^{k-r}}\sum_{n=1}^{m}(-n)^r a_n q_2^{-n}\right|\leq\left(m+\frac{k}{\sqrt{3}\pi}\right)^ke^{\sqrt{-D}\pi m/2}\sum_{n=1}^{m}|a_n|.$$

We also have that 
\begin{align*}
|R_{-2}\circ\dotsc\circ R_{-2k}\left(a_m q^{-m}\right)(\tau_{Q_1})|&=|a_m|e^{-\sqrt{-D}\pi m}\left|\sum_{r=0}^k(-1)^{k-r}\binom{k}{r}\frac{(-2k+r)_{k-r}(-m)^r}{(2\sqrt{-D}\pi )^{k-r}}\right|.
\end{align*}

Hence, if the polynomial is reducible we must have that 
$$e^{\sqrt{-D}\pi m}|a_m|\left|\sum_{r=0}^k\binom{k}{r}\frac{(-2k+r)_{k-r}(m)^r}{(2\sqrt{-D}\pi )^{k-r}}\right|<
\sum_{n=1}^{m} B|a_n|,$$
where $B_0$, $B_1$ and $B_2$ are defined in \eqref{eq:b0}, \eqref{eq:b1}, \eqref{eq:b2} and
\begin{align*}
B_3&:=\left(m+\frac{k}{\sqrt{3}\pi}\right)^ke^{\sqrt{-D}\pi(m-1)},\\
B_4&:=\left(m+\frac{k}{\sqrt{3}\pi}\right)^ke^{\sqrt{-D}\pi m/2}
\end{align*}
and
\begin{align}
\label{eq:b}
B:=B_0+B_1+B_2+B_3+B_4.
\end{align}

Equivalently, we have
$$
\sqrt{-D}<\frac{2}{\pi}\log{\left(\frac{B \sum_{n=1}^{m}|a_n|}{|a_m|\left|\sum_{r=0}^k\binom{k}{r}\frac{(-2k+r)_{k-r}(m)^r}{(2\sqrt{-D}\pi )^{k-r}}\right|} e^{-\sqrt{-D}\pi(m-1/2)}\right)}.$$

Note that as a function of $-D$ the right hand side of the last inequality stays bounded so for large enough $-D$ this inequality cannot be satisfied and hence the polynomials must be irreducible.
\end{proof}

\begin{proof}[Proof of Corollary \ref{main-coro}]
Since all negative fundamental discriminants $D$ with $|D|<15$ have class number one, these polynomials are automatically irreducible. Therefore we assume that $-D\geq 15$.
We have
$$\left|\sum_{r=0}^k\binom{k}{r}\frac{(-2k+r)_{k-r}(m)^r}{(2\sqrt{-D}\pi )^{k-r}}\right|>m^k-\sum_{r=0}^{k-1}\binom{k}{r}\frac{k^{k-r}(m)^r}{(\sqrt{-D}\pi )^{k-r}}=2m^k-\left(m+\frac{k}{\sqrt{-D}\pi}\right)^k.$$
A simple calculation gives that for $c>1$ and 
$$\sqrt{-D}>\frac{k}{m\pi\left(\sqrt[k]{\frac{2c-1}{c}}-1\right)}$$
the following inequality holds:
$$2m^k-\left(m+\frac{k}{\sqrt{-D}\pi}\right)^k>\frac{m^k}{c}.$$ 
Since for $-D\geq 15$ 
$$m \mapsto e^{\frac{4\pi m}{\sqrt{3}}-\sqrt{-D}\pi(m-1/2)}$$
is a decreasing function we can estimate 
$$B_1e^{-\sqrt{-D}\pi(m-1/2)}\leq 610m^{1+2k}\left(m+\frac{k}{\sqrt{3}\pi}\right)^k.$$
Similarly we obtain 
\begin{align*}
B_0e^{-\sqrt{-D}\pi(m-1/2)}&\leq 2.5m^{1+2k}\left(m+\frac{k}{\sqrt{3}\pi}\right)^k,\\
B_2e^{-\sqrt{-D}\pi(m-1/2)}&\leq 0.6m^{1+2k}\left(m+\frac{k}{\sqrt{3}\pi}\right)^k,\\
B_3e^{-\sqrt{-D}\pi(m-1/2)}&\leq 0.0023m^{1+2k}\left(m+\frac{k}{\sqrt{3}\pi}\right)^k
\end{align*}
and
$$B_4e^{-\sqrt{-D}\pi(m-1/2)}\leq m^{1+2k}\left(m+\frac{k}{\sqrt{3}\pi}\right)^k.$$
Therefore the polynomial is irreducible if
$$\sqrt{-D}>\frac{k}{m\pi\left(\sqrt[k]{\frac{2c-1}{c}}-1\right)}$$
and 
$$\sqrt{-D}>\frac{2}{\pi}\log\left(615c m^{1+k}\left(m+\frac{k}{\sqrt{3}\pi}\right)^k\frac{\sum_{n=1}^{m}|a_n|}{|a_m|}\right).$$
\end{proof}

\section{Example} \label{example}

We consider $F:=E_{10}/\Delta$, a weakly holomorphic modular form of weight $-2$. 
Zagier previously considered this example in his foundational paper on singular moduli \cite{zagier2000traces}.
Since $E_{10}$ does not vanish at infinity and $\Delta$ is a cusp form with a simple root at infinity, $F$ has a simple pole at infinity. Thus we have
$$F=q^{-1}+O(1).$$
Hence, in the setting of Corollary \ref{main-coro} we have $m=k=b_1=1$. Therefore, the two inequalities reduce to
$$ \sqrt{-D} > \max \br{ \frac{c}{\pi(c-1)} , \frac{2}{\pi} \log\br{615c  \br{1+\frac{1}{\sqrt 3 \pi}} } }.$$
For $c=1.5$ we get
$$\sqrt{-D} > 4.45366,$$
which proves that $\widehat{H}_{D,F}(x)$ is irreducible for $D$ being a fundamental discriminant $D\le -20$. The fundamental discriminants $0 > D >-20$ except $D=-15$ have class number $h(D)=1$, so $\widehat{H}_{D,F}(x)$ is of degree 1 and therefore also irreducible. We cannot solve the $D=-15$ case with Corollary \ref{main-coro} since
$$\frac{2}{\pi} \log\br{615c  \br{1+\frac{1}{\sqrt 3 \pi}} } > \sqrt{15}$$
for all $c>1$.
However, for $D=-15$ we numerically compute in SAGE (see \cite{sage})
$$\widehat{H}_{D,F}(x)=x^2 + 176625x+ 9890505,$$
which is easily seen to be irreducible (in fact, its roots generate the field $\Q(\sqrt{5})$).

\bibliographystyle{plain}
\bibliography{references}

\end{document}